\newcommand{\Mdef}[2]{\newcommand{#1}{\relax \ifmmode #2 \else $#2$\fi}}
\newcommand{\sm }{\wedge}
\newcommand{\tensor}{\otimes}
\newcommand{\Hom}{\mathrm{Hom}}
\newcommand{\Ext}{\mathrm{Ext}}
\Mdef{\bhom}{\mathbf{\hat{H}om}}
\Mdef{\Mod}{\mathrm{mod}}
\newcommand{\st}{\; | \;}
\newcommand{\hash}{\#}
\newtheorem{thm}{Theorem}[section]
\newtheorem{lemma}[thm]{Lemma}
\newtheorem{cor}[thm]{Corollary}
\theoremstyle{definition}
\newcommand{\qqed}{\qed \\[1ex]}
\renewenvironment{proof}[1][\hspace*{-.8ex}]{\noindent {\bf Proof #1:\;}}{\qqed}
\Mdef{\PH} {\Phi^H}
\Mdef{\PK} {\Phi^K}
\Mdef{\PL} {\Phi^L}
\Mdef{\PT} {\Phi^{\T}}
\Mdef{\ef}{E{\cF}_+}
\Mdef{\etf}{\widetilde{E}{\cF}}
\Mdef{\eg}{E{G}_+}
\Mdef{\etg}{\tilde{E}{G}}
\Mdef{\infl}{\mathrm{inf}}
\Mdef{\defl}{\mathrm{def}}
\Mdef{\res}{\mathrm{res}}
\Mdef{\ind}{\mathrm{ind}}
\Mdef{\coind}{\mathrm{coind}}
\Mdef{\univ}{\mathcal{U}}
\Mdef{\Fp}{\mathbb{F}_p}
\Mdef{\Zpinfty}{\Z /p^{\infty}}
\Mdef{\Zpadic}{\Z_p^{\wedge}}
\newcommand{\lra}{\longrightarrow}
\newcommand{\spec}{\mathrm{Spec}}
\Mdef{\we}{\mathbf{we}}
\Mdef{\fib}{\mathbf{fib}}
\Mdef{\cof}{\mathbf{cof}}
\Mdef{\BI}{\mathcal{BI}}
\Mdef{\B}{\mathbb{B}}
\Mdef{\C}{\mathbb{C}}
\Mdef{\D}{\mathbb{D}}
\Mdef{\E}{\mathbb{E}}
\Mdef{\T}{\mathbb{T}}
\Mdef{\F}{\mathbb{F}}
\Mdef{\G}{\mathbb{G}}
\Mdef{\I}{\mathbb{I}}
\Mdef{\N}{\mathbb{N}}
\Mdef{\Q}{\mathbb{Q}}
\Mdef{\R}{\mathbb{R}}
\Mdef{\bbS}{\mathbb{S}}
\Mdef{\Z}{\mathbb{Z}}
\Mdef{\bA}{\mathbb{A}}
\Mdef{\bB}{\mathbb{B}}
\Mdef{\bC}{\mathbb{C}}
\Mdef{\bD}{\mathbb{D}}
\Mdef{\bE}{\mathbb{E}}
\Mdef{\bF}{\mathbb{F}}
\Mdef{\bG}{\mathbb{G}}
\Mdef{\bH}{\mathbb{H}}
\Mdef{\bI}{\mathbb{I}}
\Mdef{\bJ}{\mathbb{J}}
\Mdef{\bK}{\mathbb{K}}
\Mdef{\bL}{\mathbb{L}}
\Mdef{\bM}{\mathbb{M}}
\Mdef{\bN}{\mathbb{N}}
\Mdef{\bO}{\mathbb{O}}
\Mdef{\bP}{\mathbb{P}}
\Mdef{\bQ}{\mathbb{Q}}
\Mdef{\bR}{\mathbb{R}}
\Mdef{\bS}{\mathbb{S}}
\Mdef{\bT}{\mathbb{T}}
\Mdef{\bU}{\mathbb{U}}
\Mdef{\bV}{\mathbb{V}}
\Mdef{\bW}{\mathbb{W}}
\Mdef{\bX}{\mathbb{X}}
\Mdef{\bY}{\mathbb{Y}}
\Mdef{\bZ}{\mathbb{Z}}
\Mdef{\cA}{\mathcal{A}}
\Mdef{\cB}{\mathcal{B}}
\Mdef{\cC}{\mathcal{C}}
\Mdef{\mcD}{\mathcal{D}} 
\Mdef{\cE}{\mathcal{E}}
\Mdef{\cF}{\mathcal{F}}
\Mdef{\cG}{\mathcal{G}}
\Mdef{\mcH}{\mathcal{H}} 
\Mdef{\cI}{\mathcal{I}}
\Mdef{\cJ}{\mathcal{J}}
\Mdef{\cK}{\mathcal{K}}
\Mdef{\mcL}{\mathcal{L}}
\Mdef{\cM}{\mathcal{M}}
\Mdef{\cN}{\mathcal{N}}
\Mdef{\cO}{\mathcal{O}}
\Mdef{\cP}{\mathcal{P}}
\Mdef{\cQ}{\mathcal{Q}}
\Mdef{\mcR}{\mathcal{R}}
\Mdef{\cS}{\mathcal{S}}
\Mdef{\cT}{\mathcal{T}}
\Mdef{\cU}{\mathcal{U}}
\Mdef{\cV}{\mathcal{V}}
\Mdef{\cW}{\mathcal{W}}
\Mdef{\cX}{\mathcal{X}}
\Mdef{\cY}{\mathcal{Y}}
\Mdef{\cZ}{\mathcal{Z}}
\Mdef{\ca}{\mathcal{a}}
\Mdef{\ct}{\mathcal{t}}
\Mdef{\At}{\tilde{A}}
\Mdef{\Bt}{\tilde{B}}
\Mdef{\Ct}{\tilde{C}}
\Mdef{\Et}{\tilde{E}}
\Mdef{\Ht}{\tilde{H}}
\Mdef{\Kt}{\tilde{K}}
\Mdef{\Lt}{\tilde{L}}
\Mdef{\Mt}{\tilde{M}}
\Mdef{\Nt}{\tilde{N}}
\Mdef{\Pt}{\tilde{P}}
\Mdef{\tA}{\tilde{A}}
\Mdef{\tB}{\tilde{B}}
\Mdef{\tC}{\tilde{C}}
\Mdef{\tE}{\tilde{E}}
\Mdef{\tH}{\tilde{H}}
\Mdef{\tK}{\tilde{K}}
\Mdef{\tL}{\tilde{L}}
\Mdef{\tM}{\tilde{M}}
\Mdef{\tN}{\tilde{N}}
\Mdef{\tP}{\tilde{P}}
\Mdef{\ft}{\tilde{f}}
\Mdef{\xt}{\tilde{x}}
\Mdef{\yt}{\tilde{y}}
\Mdef{\Ab}{\overline{A}}
\Mdef{\Bb}{\overline{B}}
\Mdef{\Cb}{\overline{C}}
\Mdef{\Db}{\overline{D}}
\Mdef{\Eb}{\overline{E}}
\Mdef{\Fb}{\overline{F}}
\Mdef{\Gb}{\overline{G}}
\Mdef{\Hb}{\overline{H}}
\Mdef{\Ib}{\overline{I}}
\Mdef{\Jb}{\overline{J}}
\Mdef{\Kb}{\overline{K}}
\Mdef{\Lb}{\overline{L}}
\Mdef{\Mb}{\overline{M}}
\Mdef{\Nb}{\overline{N}}
\Mdef{\Ob}{\overline{O}}
\Mdef{\Pb}{\overline{P}}
\Mdef{\Qb}{\overline{Q}}
\Mdef{\Rb}{\overline{R}}
\Mdef{\Sb}{\overline{S}}
\Mdef{\Tb}{\overline{T}}
\Mdef{\Ub}{\overline{U}}
\Mdef{\Vb}{\overline{V}}
\Mdef{\Wb}{\overline{W}}
\Mdef{\Xb}{\overline{X}}
\Mdef{\Yb}{\overline{Y}}
\Mdef{\Zb}{\overline{Z}}
\Mdef{\db}{\overline{d}}
\Mdef{\hb}{\overline{h}}
\Mdef{\qb}{\overline{q}}
\Mdef{\rb}{\overline{r}}
\Mdef{\tb}{\overline{t}}
\Mdef{\ub}{\overline{u}}
\Mdef{\vb}{\overline{v}}
\Mdef{\hc}{\hat{c}}
\Mdef{\he}{\hat{e}}
\Mdef{\hf}{\hat{f}}
\Mdef{\hA}{\hat{A}}
\Mdef{\hH}{\hat{H}}
\Mdef{\hJ}{\hat{J}}
\Mdef{\hM}{\hat{M}}
\Mdef{\hP}{\hat{P}}
\Mdef{\hQ}{\hat{Q}}
\Mdef{\thetab}{\overline{\theta}}
\Mdef{\phib}{\overline{\phi}}
\Mdef{\uA}{\underline{A}}
\Mdef{\uB}{\underline{B}}
\Mdef{\uC}{\underline{C}}
\Mdef{\uD}{\underline{D}}
\Mdef{\bolda}{\mathbf{a}}
\Mdef{\boldb}{\mathbf{b}}
\Mdef{\bfD}{\mathbf{D}}
\Mdef{\fm}{\frak{m}}
\Mdef{\fp}{\frak{p}}
\newcommand{\fX}{\mathfrak{X}}
\Mdef{\eps}{\epsilon}
\newcommand{\Qt}{\widetilde{\Q}}
\newcommand{\Qc}{\Q [c]}
\newcommand{\Qd}{\Q [d]}
\renewcommand{\Lc}{\Q [c, c^{-1}]}
\newcommand{\II}{\mathbb{I}}
\newcommand{\cOcC}{\cO_{\cC}}
\newcommand{\cEi}{\cE^{-1}}
\newcommand{\cQh}{\cQ^{\hash}}
\newcommand{\cQp}{\mathcal{Q}'}
\newcommand{\cQph}{(\cQp)^{\hash}}
\newcommand{\mcDh}{\mcD^{\hash}}
\newcommand{\mcDp}{\mathcal{D}'}
\newcommand{\mcDph}{(\mcDp)^{\hash}}
\newcommand{\cCh}{\cC^{\hash}}
\newcommand{\cCt}{\widetilde{\cC}}
\newcommand{\cCth}{\cCt^{\hash}}
\newcommand{\shv}{\mathrm{Shv}}
\newcommand{\sub}{\mathrm{Sub}}
\newcommand{\Gammat}{\widetilde{\Gamma}}
\newcommand{\Gt}{\widetilde{G}}
\newcommand{\SGt}{\widetilde{SG}}
\newcommand{\modules}{\mbox{-mod}}
\newcommand{\etc}{\tilde{E}\cC}
\begin{document}
\title{The rational homotopy groups of virtual spheres for rank 1
  compact Lie groups}

\author{J.P.C.Greenlees}
\address{Mathematics Institute, Zeeman Building, Coventry CV4, 7AL, UK}
\email{john.greenlees@warwick.ac.uk}

\date{}

\begin{abstract}
  We calculate the rational representation-ring-graded stable stems
  for rank 1 groups, $SU(2), SO(3), Pin (2), O(2), Spin(2)$ and
  $SO(2)$, in the same spirit as the calculations for finite groups in
  \cite{GSerrefinite}. 
\end{abstract}

\thanks{   The author is grateful to J.D.Quigley for his interest,
  which encouraged the author to write this down. The author
    would also  like to thank the Isaac Newton   Institute for
  Mathematical Sciences, Cambridge, for support and   hospitality
  during the programme Equivariant Homotopy Theory in  Context, where
  this paper was written. This work was supported by EPSRC grant EP/Z000580/1.  } 
\maketitle

\tableofcontents

\section{Context}

\subsection{Precursors}
In stable homotopy theory, the spheres $S^n$ are basic building
blocks, and also invertible. Serre's
Finiteness Theorem states that they are rationally very simple in the
sense that the group $[S^m,S^n]$ of stable maps is finite if $m\neq
n$; since $[S^n,S^n]=\Z$, this means that rationally the structure of stable
homotopy is very simple. Equivariantly, rationalisation is still a
massive simplification, but
the residual structure in the rationalisation is more interesting. 

If $G$ is a compact Lie group and  $V, W$ are finite dimensional orthogonal real
representations of $G$, it is natural to consider their one-point
compactifications $S^V, S^W$, and then ask to calculate the group $[S^V,
S^W]^G$ of stable equivariant maps.  If $G$ is finite these are again
finitely generated, and tom Dieck splitting reduces the computation to
a non-equivariant one when $V$ is a summand of $W$. For general $V,W$,
a rational computation was given in
\cite{GSerrefinite} in terms of the rational representation theory of
subquotients of $G$. The basic calculation of the rational $\Z$-graded maps
$$[S^V, S^W]^G_*\tensor \Q=\prod_{(H)}\Hom_{W_G(H)}(\pi_*(S^{V^H}),
\pi_*(S^{W^H})), $$
lets one understand the rank of $[S^V,S^W]^G$ in terms of fixed point
representations. 

However if $G$ is not finite, the groups $[S^V,S^W]^G$ are often not
finitely generated. For example, tom Dieck calculated the
Burnside ring $A(G)=[S^0,S^0]^G$:
taking $f:S^0\lra S^0$ to the function $H\longmapsto
\deg (f^H)$ gives an isomorphism with the ring of continuous, integer valued
functions on the space $\Phi G$ of subgroups of $G$ with  finite Weyl group:
$$[S^0,S^0]^G\stackrel{\cong}\lra C(\Phi G,\Z). $$
 As an abelian group, this is finitely generated if and only if
the identity component of $G$ is a central torus. Despite this, we can get
a reasonable understanding of $[S^V,S^W]^G$ by rationalizing, and it
is the purpose of this paper to give a complete answer when $G$ is a
rank 1 group. We will emphasize the overall shape of the answer, and
particularly the fact that virtual representations can behave rather
differently to actual representations; this is not apparent for finite
groups, but becomes more and more apparent as the rank increases.

There are now algebraic models for rational equivariant spectra for all 
compact Lie groups of rank 1. The models are designed for
calculation,  as we illustrate here. As in the case
of finite groups \cite{GSerrefinite},  the calculation is
entirely reduced to real representation theory.

The present note should give enough detail for the reader to follow
the calculations once the algebraic categories are accepted as models
of $G$-spectra, and some may view this as an entry point. 
References for proofs are given along the way, and some may find those
longer accounts  a better introduction. A broader context is given in the
survey \cite{AGconj}. The book \cite{BtD} is used as a reference for
representation theory of compact Lie groups.

\subsection{The groups}
We  will concentrate on the 6 most obvious rank 1 compact Lie groups. 
However, in the light of \cite{gq1} this can easily be adapted to any
rank 1 group.  The two semisimple rank 1 groups are  $SU(2)$ and 
$SO(3)$. For each of those we consider also the maximal torus and its normalizer.

Altogether we adopt the following abbreviations 
$$\xymatrix{
  SU(2)\rto & SO(3)&&\Gammat\rto &\Gamma\\
  Pin (2)\rto \uto& O(2)\uto&&\Gt\rto \uto & G\uto\\
  Spin (2)\rto \uto& SO(2)\uto&&\SGt \rto \uto & SG\uto
}$$
In some sense $G=O(2)$ is the core case, which helps explain the
naming scheme. In all cases the double cover is indicated by a tilde.
Although  $Spin (2) $ and $SO(2)$ are both copies of the circle group,
one needs to remember the quotient map between them is of degree 2 and
does not induce a bijection on subgroups.

\subsection{Conventions}
By stability we have $[S^V,S^W]^G=[S^0, S^{W-V}]^G$, and we will 
generally consider the virtual representation $U=W-V$. Since integer 
spheres shift homotopy groups in a simple fashion, it is convenient
 to treat fixed spheres differently: we assume $U^G=W^G-V^G=0$ and then calculate 
$[S^0, S^U]^G_*$ (integer gradings). 

We write $\Gamma=SO(3), G=O(2), SG=SO(2)$ and $W=G/SG$;
double covers will be indicated by a tilde. 

Without further explicit mention,  all spectra are rational.

\subsection{Contents}
The purpose of this paper is to calculate $[S^0,S^U]^H_*$ when 
$H$ is one of the 6 rank 1 groups we named  above, and where $U^H=0$. 

The key input is for the circle group, so we begin with that in
Section \ref{sec:so2}. Building on this, we move to $O(2)$, with
general notation in Section \ref{sec:o2} and then the cyclic and
dihedral parts in the following two sections. The transition to the group
$SO(3)$ only involves noting fusion and the change in representation
theory: this is dealt with in Section \ref{sec:so3}. Finally we turn to the
double covers, which are minor variants. We deal with $Pin(2)$ in
Section \ref{sec:pin2} and with $SU(2)$ in Section \ref{sec:su2}.

\section{The group $SO(2)$}
\label{sec:so2}
In this section the ambient group is $SG=SO(2)$. It is reasonable to
do this first, since it is the ingredient with the most substance.
All the calculations in this section can be made with \cite{s1q}, since 
the Adams spectral sequence collapses to a short exact sequence.


\subsection{Subgroups}
The set of finite subgroups of $SG$ is $\cC=\{ C_1, C_2, \ldots \}$,
and we will use the letter $s$ to index these groups $C_s$.
We write $\fX_{SG}=\sub(SG)/SG$  for the space
 of conjugacy classes of subgroups.  With the Hausdorff metric
 topology, it is the one point compactification $\cCh$, with $SG$ as the
compactifying point. The Zariski topology will not play an explicit
role here. 

\subsection{Representations}
The trivial representation is real, but all other representations
$z^n$ are complex, so we write $U=a+\bigoplus_{n\neq 0}a_n z^n$ for
integers $a, a_n$, almost all zero. In our case $a=0$ because of our assumption that $U^{SG}=0$.

We just need to remember the dimension function
$d_U: \cC \lra \Z$. Thus
$$d_U(s)=\dim_{\R}(U^{C_s})=a+2\sum_{s|n} a_n. $$
The function $d_U$  takes finitely many values, so we may let 
 $m_U=\min \{ d_U(s)\st s\geq 1\},
M_U=\max \{ d_U(s)\st s\geq 1\}$ and call $[m_U,M_U]$ the {\em
  range} of $U$.
\subsection{Notation}
We summarize notation for essential ingredients and conventions. These
will be constant companions throughout the calculation.
\begin{itemize}
\item $\Qc=H^*(BSG)$ where $c\in H^2(BSG)$ is of degree $-2$.
\item $\Lc$, the Laurent ring.  
\item Define $I$ by the short exact sequence, 
$0\lra \Qc\lra \Lc\lra I\lra 0$.
\item  There is an isomorphism  $I\cong \Sigma^2\Qc^*, $
where $V^*=\Hom_{\Q}(V,\Q)$ denotes graded vector space duality. Note the
double suspension, and remember it!
\item $\cOcC=\prod_s \Qc$, the ring of functions on $\cC$.
\item $\cE=\langle c^v\st v:\cC\lra \N \mbox{ zero a.e. }\rangle
  \subseteq \cOcC$, the
  multiplicatively closed set of Euler classes.
\item $t=\cEi \cOcC$, the $\cC$-Tate ring for the sphere spectrum
  \item Define $\II$ by the short exact sequence 
$0\lra \cOcC\lra t\lra \II\lra 0$.
\item There is an isomorphism 
$\II\cong \bigoplus_s I_s$, where $I_s$ is the copy of $I$ for the   $s$th factor $\Q[c]$. 

  \end{itemize}

  \subsection{The standard model}
 The standard model $\cA(SG)$ is an abelian category  which provides
 an algebraic model for rational $SG$-spectra in the sense that there
 is a Quillen equivalence \cite{tnqcore}:
  $$  \mbox{$SG$-spectra}\simeq \mbox{DG-$\cA(SG)$}. $$
We summarize necessary facts about $\cA (SG)$ from 
\cite{s1q}.

The objects of $\cA(SG)$ are 
$(N\lra t\tensor V)$ where $N$ is a $\cOcC$-module, $V$ is a graded
rational vector space and the map $N\lra t\tensor V$ is inverting $\cE$. This should
be thought of as $N$ (the `nub') together with the rigidification that
the vector space $V$ gives a `basis' of $\cEi N$. Morphisms $(N\lra
t\tensor V)\lra (N'\lra t\tensor V')$ are given by compatible pairs
$\theta : N \lra N'$ and $\phi : V\lra V'$. The object associated to
an $SG$-spectrum $X$ is
$$\pi^{\cA}_*(X)=(\pi^{SG}_*(DE\cC_+\sm X) \lra \pi^{SG}_*(DE\cC_+\sm
\etc \sm X)) $$
so
$$ N= \pi^{SG}_*(DE\cC_+\sm X) \mbox{ and } V=\pi_*(\Phi^{SG}X), $$
where $D(\cdot )=F( ., S^0)$ is functional duality and $\Phi^{SG}$ is
geometric fixed points; $\cOcC=\pi^{SG}_*(DE\cC_+)$ and
$t=\pi^{SG}_*(DE\cC_+\sm \etc)$. 

The category $\cA (G)$  has injective dimension 1 and there is a short exact sequence
$$0\lra \Ext(\Sigma H_*(X), H_*(Y))\lra [X,Y]^{SG}\lra \Hom (H_*(X),
H_*(Y))\lra 0.$$
It follows from this short exact sequence 
that all objects  are formal (i.e., equivalent to an object with 
  zero differential), and we may therefore work with objects of
  $\cA(SG)$. We will often use notation for objects of $\cA (SG)$  that is imported 
  from $SG$-spectra.

The category $\cA (SG)$ is  symmetric monoidal with tensor unit
$$S^0=(\cOcC\lra t\tensor \Q).$$

One checks that if $T$ is a torsion $\cOcC$-module in the sense that
$\cEi T=0$ then $T=\bigoplus_s T_s$ and $T_s$ is a torsion
$\Qc$-module for all $s$. We then define
$$\Sigma^UT=\bigoplus_s \Sigma^{d_U(s)}T_s. $$
More generally we may define $\Sigma^UN$ for any $\cOcC$-module. This
relies on the fact that $d_U$ is zero almost everywhere. Thus if
$\cC_d$ denotes the subgroups on which $d_U$ takes the value $d$,
$\cC_0$ is cofinite and the partition is finite.  We may define
$$\Sigma^UN=\prod_d \Sigma^{d}e_{\cC_d}N. $$

\subsection{The calculation}
The objects $e(V)=(t\tensor V\lra t\tensor V)$, for a graded vector
space $V$, and $f(T)=(T\lra 0)$, for a torsion $\cOcC$-module $T$,  
are useful. Indeed, if $X=(M\lra t\tensor U)$ then 
$$\Hom (X, e(V))=\Hom (U,V), \Hom (X, f(T))=\Hom (M,T)$$
Thus $e(V)$ is injective for any $V$ and if $T$ is a torsion injective
$\cOcC$-module, $f(T)$ is
injective. Thus we have an injective resolution
$$0\lra S^0 \lra e(\Q)\stackrel{i}\lra f(\II)\lra 0. $$

By suspension, we have $S^U=(\Sigma^U\cOcC \lra t\tensor \Q)$ and an injective
resolution
$$0\lra S^U \lra e(\Q)\stackrel{i}\lra f(\Sigma^U \II)\lra 0. $$
The Ext groups $\Ext^* (S^0, S^U)$ are the homology of the cochain complex
$$\xymatrix{
  \Hom (S^0,e(\Q))\rto^{i_*} \ar@{=}[d]&\Hom (S^0,f(\Sigma^U\II ))\ar@{=}[d]\\
  \Hom (\Q,\Q)\rto \ar@{=}[d]&\Hom (\cOcC ,\Sigma^U \II )\ar@{=}[d]\\
  \Q\rto &\Sigma^{U}\II
  }$$
in cohomological degrees 0 and 1.   We may read off the answer.
  
  \begin{lemma}
 The vector space $(\Sigma^{U}\II)_0$ is finite dimensional: it  is a
 sum with  one copy of $\Q$ for each $s$ with $d_U(s)<0$. 
 The connecting map $\Q \lra \Sigma^{U}\II$ is the diagonal. In
 particular it is zero if $d_U(s)\geq 0$ for all $s$ and it is nonzero
 if there is some $s$ with $d_U(s)<0$.
\end{lemma}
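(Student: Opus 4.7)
The plan is to identify $(\Sigma^U\II)_0$ slot by slot and then to compute the connecting map by unwinding the injective resolution.

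Since $\Sigma^U\II = \bigoplus_s \Sigma^{d_U(s)} I_s$ and each summand $I_s \cong \Sigma^2\Qc^*$ is concentrated in strictly positive even degrees, the degree $0$ part of $\Sigma^{d_U(s)} I_s$ is one-dimensional when $d_U(s)<0$ and is zero when $d_U(s)\geq 0$ (recall that $d_U(s)$ is even because $a=0$). Finite dimensionality is then automatic, since $d_U$ vanishes on the cofinite set $\cC_0$, so only finitely many slots contribute.

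To identify the connecting map $\Q \to (\Sigma^U\II)_0$, I would unwind the construction of $i : e(\Q) \to f(\Sigma^U\II)$. Slot by slot, $i$ is built from the quotient $\Lc \twoheadrightarrow \Lc/\Sigma^{d_U(s)}\Qc \cong \Sigma^{d_U(s)} I_s$, since the kernel of this quotient is precisely the nub component of $S^U$ in slot $s$. The generator $1 \in \Hom(S^0,e(\Q)) = \Hom(\Q,\Q)$ corresponds to the unit morphism, whose nub part is the inclusion $\cOcC \hookrightarrow t$ sending $1\mapsto 1$. Composing with $i$ therefore sends $1 \in \cOcC$ to its class in $\Sigma^U\II$, computed slot by slot. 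When $d_U(s)\geq 0$ the submodule $\Sigma^{d_U(s)}\Qc$ already contains $1\in\Lc$, so the class vanishes; when $d_U(s)<0$ the class of $1$ sits in degree $0$ and generates the one-dimensional piece $(\Sigma^{d_U(s)}I_s)_0 = \Q$. Assembling these slot-wise contributions exhibits $i_*$ as the diagonal $\Q\to\bigoplus_{s:d_U(s)<0}\Q$.

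The main point to pin down is the grading bookkeeping: one has to verify that the identification $\Lc/\Sigma^{d_U(s)}\Qc \cong \Sigma^{d_U(s)} I_s$ really does send the class of $1 \in \Lc$ to a generator of the degree $0$ part whenever $d_U(s)<0$. Once this is checked, the statements about the image of $i_*$, and hence about vanishing or nonvanishing of the map $\Q \to \Sigma^U\II$, are a direct diagram chase.
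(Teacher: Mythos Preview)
Your argument is correct. The paper does not supply a proof of this lemma at all: it is stated and immediately followed by the corollary, so the author is treating the claim as a direct unwinding of the definitions. Your proposal carries out precisely that unwinding---identifying $(\Sigma^{d_U(s)}I_s)_0$ from the fact that $I_s$ lives in strictly positive even degrees, and tracking $1\in\cOcC$ through the composite $\cOcC\hookrightarrow t\twoheadrightarrow t/\Sigma^U\cOcC=\Sigma^U\II$ to see the diagonal---so there is nothing to compare against beyond noting that you have made explicit what the paper leaves implicit.
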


\begin{cor}
  In even degrees $[S^0,S^U]^{SG}_*$ is $\Q$ if $d_U\geq 0$ and 
it is 0 if  $d_U$ takes a negative value. 

In odd degrees $[S^0,S^U]^{SG}_*$ is $\Sigma^{U-1} \II$ if $d_U\geq 
0$ and it is the quotient of this by the diagonal $\Delta_{-1} \cong \Q$ in degree $-1$ if 
$d_U$ takes a negative value. 
\end{cor}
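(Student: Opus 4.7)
The plan is to combine the injective resolution
$$0 \lra S^U \lra e(\Q) \lra f(\Sigma^U \II) \lra 0$$
displayed just above the Lemma with the algebraic-model short exact sequence
$$0\lra \Ext(\Sigma H_*(X), H_*(Y))\lra [X,Y]^{SG}\lra \Hom(H_*(X), H_*(Y))\lra 0$$
recorded earlier. The first step is to apply $\Hom^*(S^0,-)$ to the resolution: since $e(\Q)$ and $f(\Sigma^U\II)$ are both injective in $\cA(SG)$, the induced long exact sequence collapses to the four-term exact sequence
$$0 \lra \Hom^*(S^0,S^U) \lra \Q \lra \Sigma^U\II \lra \Ext^{1,*}(S^0,S^U) \lra 0,$$
whose middle map is the one already exhibited in the displayed diagram, and which the Lemma identifies as the diagonal into the finite-dimensional degree-$0$ part of $\Sigma^U\II$.

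Reading off the kernel and cokernel is then immediate. If $d_U\geq 0$, the diagonal is zero, so $\Hom^*(S^0,S^U)=\Q$ concentrated in degree $0$ and $\Ext^{1,*}(S^0,S^U)=\Sigma^U\II$; if some $d_U(s)<0$ the diagonal is injective, so $\Hom^*(S^0,S^U)=0$ and $\Ext^{1,*}(S^0,S^U)=\Sigma^U\II/\Delta_0$, where $\Delta_0$ denotes the image of the diagonal copy of $\Q$.

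Next I would feed these into the algebraic-model sequence in its graded form
$$0\lra \Ext^1(\Sigma^{n+1}S^0, S^U)\lra [S^0,S^U]^{SG}_n\lra \Hom(\Sigma^n S^0, S^U)\lra 0.$$
The $\Hom$-piece is supported at $n=0$ and contributes $\Q$ (respectively $0$). The $\Ext$-piece is $(\Sigma^U\II)_{n+1}$ (respectively $(\Sigma^U\II/\Delta_0)_{n+1}$); regarded as a graded object in the variable $n$ this is $\Sigma^{U-1}\II$, and the degree-$0$ diagonal $\Delta_0$ is relabelled $\Delta_{-1}$ in degree $-1$. Because every nontrivial irreducible $z^n$ is complex, each $d_U(s)$ is even, so $\Sigma^U\II$ sits in even degrees and $\Sigma^{U-1}\II$ in odd degrees; the $\Hom$ and $\Ext$ contributions therefore occupy disjoint parities of $n$, the extension splits automatically, and the stated descriptions follow in each case.

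The only real obstacle is the degree-bookkeeping introduced by the $\Sigma$ in $\Ext(\Sigma H_*(X), H_*(Y))$, which is what converts $\Sigma^U\II$ and $\Delta_0$ into $\Sigma^{U-1}\II$ and $\Delta_{-1}$; once this shift is tracked consistently, the Corollary reads off from the Lemma.
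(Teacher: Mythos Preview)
Your proposal is correct and follows essentially the same approach as the paper: the paper sets up precisely this machinery (the injective resolution of $S^U$, the identification of $\Hom(S^0,-)$ applied to it, and the $\Hom$/$\Ext$ short exact sequence for $[X,Y]^{SG}$), states the Lemma identifying the connecting map, and leaves the Corollary as an immediate consequence. Your write-up makes explicit the degree bookkeeping and the parity observation that forces the extension to split, which the paper leaves to the reader.
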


Thus the even degree homotopy is of dimension 0 or 1. The homotopy
in odd degrees $\geq 1$ is a vector space of countable dimension. It
may be non-zero and finite dimensional in degrees down to $m_U+1$ and
in negative degrees less than this, the homotopy is 0.
\section{The group $O(2)$}
\label{sec:o2}

In Sections \ref{sec:o2}, \ref{sec:o2cyclic} and \ref{sec:o2dihedral},
the ambient group is $G=O(2)$. In the present section we
introduce the subgroups and representations, whilst in Sections
\ref{sec:o2cyclic} and \ref{sec:o2dihedral} we describe the two
summands of $[S^0, S^U]^G_*$.
All the calculations in this section can be made with \cite{o2q}, since 
the Adams spectral sequence collapses to a short exact sequence.  

\subsection{Subgroups in blocks}

We describe the space $\fX_G=\sub(G)/G$ of subgroups of $G=O(2)$ up to
conjugacy with the Hausdorff metric topology. First, we write
$$\cC=\{ C_1, C_2, \ldots \} \mbox{ and }
\mcD=\{ (D_2), (D_4), \ldots \} $$
for the cyclic and dihedral parts. The subgroup $SG=SO(2)$ is the 
limit point of $\cC$,  and the subgroup $G=O(2)$ is the 
limit point of $\mcD$. Thus, with the Hausdorff metric topology we
have a partition into clopen subsets
$$\fX_G=\cCh\amalg \mcDh.$$
We refer to $\cCh$ as the 
{\em cyclic block} (it is `dominated' by $SG$) and to $\mcDh$ as the 
{\em dihedral block} (it is `dominated' by $G$). 

It will not play an explicit role here, but in fact the space $\fX_G$ also
has a Zariski topology. The displayed partition is also a disjoint 
union of clopen sets in the Zariski
topology. In the Zariski topology, the subspace $\mcDh$ is 
still the one point compactification of $\mcD$, but in the Zariski
topology $\cCh$ is homeomorphic to $\spec (\Z)$, so that  
the closure of the point $SG$ is the whole space.

\subsection{Models}

The standard model $\cA(G)$ is an abelian category  which provides
 an algebraic model for rational $G$-spectra in the sense that there
 is a Quillen equivalence \cite{BarnesO(2)}. 
  $$  \mbox{$G$-spectra}\simeq \mbox{DG-$\cA(G)$} .$$
  Since all objects are formal we work with $\cA(G)$, but use the notation imported
  from $G$-spectra.

Since the block decomposition is as clopen sets, the algebraic model 
of rational $O(2)$ spectra splits in a corresponding fashion 
$$\cA (G|\cCh) \times \cA (G|\mcDh). $$
The cyclic part has already been essentially dealt with in 
Section \ref{sec:so2} since 
$$\cA (G|\cCh)=\cA(SG)[W]; $$
where $W=W_G(SG)$ is of order 2, and acts on the coefficient rings
$\Q[c]$ to negate $c$. The dihedral part, $\cA (G|\mcDh)$, 
is simply a category of equivariant
sheaves. More precisely, we associate the Weyl group $W_G(H)$ to each
point $H$ of $\mcD^{\hash}$, and write $\cW$ for this collection of groups (it is a
group of order 2 for each dihedral subgroup and the trivial group for
$O(2)$). In an equivariant sheaf, $W_G(H)$ acts on the stalk over $H$,
and the germ-spreading map is equivariant: we then have
$$\cA(G|\mcDh)=\cW-\shv/\mcD^{\hash}. $$

\subsection{Representations}
It is reassuring to  name explicit simple representations.
We write $\sigma_1$ for the natural representation of $O(2)$ on
$\R^2$, and we write $\sigma_n$ for the pullback of $\sigma_1$ along
the projection $O(2)\lra O(2)/C_n\cong O(2)$ (where the isomorphism is
chosen to preserve orientation).  We write $\delta$ for the inflation of
the sign representation of $O(2)/SO(2)$ on $\R$. The representations,
$1, \delta, \sigma_1, \sigma_2, \ldots$ give a complete list of
representatives for simple representations over $\R$.
Thus
$$U=a+b\delta +\sum_{n\geq 1} c_n\sigma_n$$
for integers $a,b, c_n$,  almost all zero. 
Associated to this we have the fixed point dimension functions
$$d_U(s) =\dim_{\R}(U^{C_s})=a+b+2\sum_{s|n}c_n$$
on cyclic groups, and 
$$d'_U(t) =\dim_{\R}(U^{D_{2t}})=a+\sum_{t|n}c_n$$
on dihedral groups. 

Note in particular that if $U^G=0$, any even dimensional
representation contains an even number of copies of $\delta$. 

If we only care about virtual representations, we have
$$RO(O(2))=\Z[\sigma_1, \delta]/(\delta^2=1, \sigma_1 \delta=\sigma_1).$$

\section{The cyclic block in $O(2)$}
\label{sec:o2cyclic}
\subsection{The role of $\delta$}
We pause to emphasize that the representation $\delta$ of $G$ restricts to
the trivial representation of $SG$, and write $b(U)$ for the
multiplicity of $\delta$ in $U$.
If $U^G=0$ we have $U^{SG}=b(U)$. We will need one fact. 

\begin{lemma}
  \label{lem:Waction}
If $U^{G}=0$ then the action of $W$ on $H_*^{SG}(S^U)$ on each degree
depends only on the parity of $b(U)$.
\end{lemma}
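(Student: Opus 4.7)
Since $V^G = 0$ forces $a = 0$, we may write $V = b\delta + \sum_{n \geq 1} c_n \sigma_n$ with $b = b(V)$. The plan is to show the invariance of the $W$-module structure on $H_k^{SG}(S^V)$ in each degree $k$ (up to a pure degree shift) under the two elementary modifications $V \mapsto V + 2\delta$ and $V \mapsto V + \sigma_n$. Since neither modification changes the parity of $b$, and these moves generate all $V$ with $V^G = 0$ from the two base cases $V = 0$ and $V = \delta$, the lemma will follow by induction.

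The first move is the easy one. Since $\delta|_{SG}$ is trivial, $2\delta$ restricts to the trivial $2$-dimensional real $SG$-representation, and the nontrivial element $w \in W$ acts on $2\delta \cong \R^2$ by $-I$, which has determinant $+1$. The induced self-map of $S^{2\delta} = S^2$ is orientation-preserving and, because $SG$ acts trivially on $S^{2\delta}$, it is $SG$-equivariantly identity-homotopic. Smashing with $S^{2\delta}$ is thus $(SG, W)$-equivalent to a pure double suspension, so the $W$-action on $H_*^{SG}$ in each degree is unchanged.

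The second move is a sign-cancellation argument in the algebraic model $\cA(SG)$, where $S^V$ is $(\Sigma^V \cOcC \to t \otimes \Q)$ and smashing with $S^{\sigma_n}$ increases $d_V(s)$ by $2$ for each $s$ with $s \mid n$. Two competing signs arise: $w$ acts on $\sigma_n$ by a reflection of determinant $-1$, contributing $-1$ to the action on the Thom class, while the corresponding $\Sigma^{\sigma_n}$-shift is implemented at each relevant $C_s$-stalk by multiplication by a degree-$2$ class in $H^*(BSG) = \Qc$, on which $w$ acts by $c \mapsto -c$. These two signs cancel, leaving the $W$-action on each $s$-summand of $H_k^{SG}$ invariant up to the degree shift by $2$.

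The main obstacle is making precise the cancellation for the $\sigma_n$ move. Concretely, one must verify that the $W$-equivariant Thom identification $\Sigma^{\sigma_n} t \to t$ in the algebraic model pairs the sign produced by $w$ on the top cell of $S^{\sigma_n}$ with the sign produced by $c \mapsto -c$ on the degree-$2$ shift. This requires tracking how $W$ acts simultaneously on the nub $\Sigma^V \cOcC$, the vertex $\Q$, and the structure map between them. Once this bookkeeping is confirmed, each $\sigma_n$ contributes paired signs that cancel, so only $b \bmod 2$ survives in the $W$-character in each degree.
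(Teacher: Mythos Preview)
Your inductive strategy---reducing to the moves $V\mapsto V+2\delta$ and $V\mapsto V+\sigma_n$---matches the paper's, and your $2\delta$ step is fine. The issue is the $\sigma_n$ step: you yourself flag that the sign-cancellation in the algebraic model is the ``main obstacle'' and leave it as unverified bookkeeping. That is a genuine gap. Tracking how $w$ acts simultaneously on the Thom class, on $c\in H^*(BSG)$, and on the structure map $\Sigma^V\cO_{\cC}\to t$ is doable but fiddly, and you have not actually done it.

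The paper avoids this entirely by a cleaner device: it uses the \emph{$G$-equivariant} inclusion $S^A\to S^{A\oplus\sigma_n}$ and the cofibre sequence $S(\sigma_n)_+\to S^0\to S^{\sigma_n}$. Since $S(\sigma_n)=SG/C_n$, its $SG$-Borel homology is concentrated in degree~$0$, so the inclusion induces an isomorphism on $H^{SG}_*$ in every degree where both sides are nonzero. Because the inclusion is a map of $G$-spectra, this isomorphism is automatically $W$-equivariant: no signs to chase. For the $\delta$ step the paper likewise uses the cofibre sequence $W_+\to S^0\to S^\delta$, which shows directly that a \emph{single} suspension by $\delta$ shifts degree by~$1$ and tensors with $\widetilde{\Q}$; this is strictly more informative than your $2\delta$ argument and is what the subsequent corollaries actually use. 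The moral: rather than cancelling signs inside the model, exhibit a $W$-equivariant map that is an isomorphism in the degrees you care about.
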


\begin{proof}
 We argue by showing the the maps $S^A\lra S^{A\oplus B}$ induce
 isomorphisms in $SG$-equivariant Borel cohomology in degrees where
 both are non-zero. These isomorphisms are $W$-equivariant.

 When adding a representation $B=\sigma_n$ we use the cofibre sequence
 $S(\sigma_n)_+\lra S^0 \lra S^{\sigma_n}$. The point is that
 $S(\sigma_n)=SG/C_n$ so that  $H^{SG}_*(S(\sigma_n)_+)$ is in degree
 0. It follows that  $H_{2k}^{SG}(S^0)$ is unchanged except in degree
0. Iterating this, we see that if $U$ has no summands $\delta$, the homology
$H_*^{SG}(S^U)$ is the same as $H_*^{SG}(S^0)$ in each degree in which
it is non-zero.

When adding a representation $\delta$ we use the cofibre sequence
$W_+\lra S^0\lra S^{\delta}$. This induces the short 
exact sequence 
$$0\lra H^{SG}_*(S^{\delta -1})\lra H^{SG}_*(S^0)[W]\lra
H^{SG}_*(S^0)\lra 0.$$
  In each degree this is either
$0\lra \Q \lra \Q[W]\lra \Qt\lra 0$ or $0\lra \Qt \lra \Q[W]\lra
\Q\lra 0$ so that suspending by $\delta$ shifts  degree by 1 and tensors with $\Qt$. 
   \end{proof}

\subsection{The calculation}
If $U$ is a representation of $G=O(2)$ with $U^G=0$ then the cyclic
contribution to $[S^0, S^U]^G$ is
$$[S^0, S^U]^G_{\cC}=([S^0,S^U]^{SG})^W.$$

We separate into two cases according to the parity of $b(U)$. One
could design a notation to capture it all at once, but  the
cost in readability is not worth it.

\subsubsection{$b(U)$ even}
In this case $\delta$ only shifts the answer, so if $U=U'\oplus 
b(U)\delta$ we have 
$$[S^0,S^U]^G_{\cC}=\Sigma^{b(U)}[S^0,S^{U'}]^G_{\cC}.$$
Now we can apply Section \ref{sec:so2} directly since $(U')^{SG}=0$. 

If $d_{U'}\geq 0$ then 
$$[S^0,S^{U'}]^{SG}_{ev}=[S^0,S^{U'}]^{SG}_0=\Q \mbox{ and $W$ acts 
  trivially. }$$
If $d_{U'}$ takes a negative value the even part is zero.
All other homotopy is in odd degrees, where it is 
$$[S^0,S^{U'}]^{SG}=\Sigma^{U'-1}\II/\Delta_{-1}, $$
where $\Delta_{-1}$ is the diagonal copy of $\Q$ in degree $-1$ if 
$d_{U'}$ takes a negative value, and is zero otherwise. 

We have $\Sigma^{U'}\II=\bigoplus_s \Sigma^{d_{U'}(s)}I_s$. 
By Lemma \ref{lem:Waction}, the action of $W$ on the nonzero terms 
$(\Sigma^{d_{U'}(s)}I_s)_{2n}$ depends only 
on $n$ (and not on $s$ or $U'$). Indeed, $W$ acts as $-1$ on $c$ so
that $\Lc$ has $\Q$ in degrees 0 mod 4 and $\Qt$ in degrees 2
mod 4, and we have
$$\Lc^W=\Q[d,d^{-1}] \mbox{ where } d=c^2 \mbox{ is of degree }
-4.$$
The representation $\Sigma^{d_{U'}(s)}I_s$ is the appropriate quotient of $\Lc$,
and its $W$-invariants are the corresponding quotient of $\Q[d,
d^{-1}]$, namely the part in degrees 0 mod 4, which is a suspension of $\Qd^*$. There is a 
splitting of cases depending on whether the bottom class is invariant
or not: $(\Qc^*)^W=\Qd$ whilst $(\Qc^*\tensor
\Qt)^W=\Sigma^2\Qd^*$. We also note that $d_U(s)=b(U)+d_{U'}(s)$.

\begin{cor}
If $b(U)$ is even then in even degrees $[S^0,S^U]^{G}_*$ is 
$\Sigma^{b(U)}\Q$ if $d_{U'}\geq 0$ and 
it is 0 if  $d_{U'}$ takes a negative value. 

In odd degrees $[S^0,S^U]^{G}_*$ is 
$\bigoplus_s(\Sigma^{d_{U}(s)-1} I_s)^W$ and it is the quotient of this by the diagonal 
$\Delta_{b(U)-1} \cong \Q$ in degree $b(U)-1$ if 
$d_{U'}$ takes a negative value. If $d_s(U')$ is 0 mod 4 then  
$(\Sigma^{d_{U}(s)-1}I_s)^W
=\Sigma^{3+d_{U}(s)}\Qd^* $, and  if 
$d_s(U')$ is 2 mod 4 then $(\Sigma^{d_{U}(s)-1}I_s)^W=\Sigma^{1+d_{U}(s)}\Qd^* $. 
\end{cor}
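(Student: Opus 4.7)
The plan is to combine three ingredients already in hand: the cyclic-block identity $[S^0,S^U]^G_{\cC}=([S^0,S^U]^{SG})^W$ recalled at the top of the section, the shift identity $[S^0,S^U]^G_{\cC}=\Sigma^{b(U)}[S^0,S^{U'}]^G_{\cC}$ (valid because $\delta$ restricts trivially to $SG$ and $b(U)$ is even, so the $\Qt$ twists coming from Lemma \ref{lem:Waction} cancel in pairs), and the SO(2) corollary from Section \ref{sec:so2} applied to $U'$, which satisfies $(U')^{SG}=0$.

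First I would handle the even degrees. By the SO(2) corollary, $[S^0,S^{U'}]^{SG}$ in even degrees is concentrated in degree $0$, where it equals $\Q$ if $d_{U'}\geq 0$ and vanishes otherwise. By Lemma \ref{lem:Waction} the $W$-action on this copy of $\Q$ depends only on the parity of $b(U')=0$ and on the degree, and in degree $0$ it is trivial; so the $W$-invariants are again $\Q$. Applying the overall shift by $b(U)$ gives the claimed $\Sigma^{b(U)}\Q$ in the first case and $0$ in the second.

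Next I would treat the odd degrees. By the SO(2) corollary, $[S^0,S^{U'}]^{SG}$ in odd degrees is the torsion object $\Sigma^{U'-1}\II=\bigoplus_s\Sigma^{d_{U'}(s)-1}I_s$, modulo the diagonal copy $\Delta_{-1}\cong\Q$ in degree $-1$ when $d_{U'}$ takes a negative value. Since $b(U)$ is even, the shift by $b(U)$ turns this into $\bigoplus_s\Sigma^{d_U(s)-1}I_s$ (using $d_U(s)=d_{U'}(s)+b(U)$) and sends $\Delta_{-1}$ to $\Delta_{b(U)-1}$. I would then take $W$-invariants summand by summand: by Lemma \ref{lem:Waction}, in each absolute degree the $W$-module carried by $\Sigma^{d_U(s)-1}I_s$ is either $\Q$ or $\Qt$ according only to the residue of that degree modulo $4$. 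Using the identities $(\Qc^*)^W=\Qd$ and $(\Qc^*\tensor\Qt)^W=\Sigma^2\Qd^*$ recorded in the text, the invariants come out to $\Sigma^{3+d_U(s)}\Qd^*$ when $d_{U'}(s)\equiv 0\pmod 4$ and to $\Sigma^{1+d_U(s)}\Qd^*$ when $d_{U'}(s)\equiv 2\pmod 4$, assembling to $\bigoplus_s I^+_s(U)$.

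The main point requiring care is the bookkeeping of three suspension shifts that interact — the $b(U)$ coming from the extra copies of $\delta$, the $-1$ from the Ext contribution in the SO(2) injective resolution, and the $+2$ built into $I_s=\Sigma^2\Qc^*$ — together with checking that the mod-$4$ classification read off from $d_{U'}(s)$ matches the mod-$4$ classification of the $W$-action given by Lemma \ref{lem:Waction}. Once the correct residue class is pinned down, the identification with $\Sigma^{?}\Qd^*$ and the handling of $\Delta_{b(U)-1}$ are routine.
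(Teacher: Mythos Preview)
Your proposal is correct and follows essentially the same route as the paper: reduce to $U'$ via the $\Sigma^{b(U)}$ shift, invoke the $SO(2)$ corollary for the even and odd parts separately, and then take $W$-invariants summand by summand using Lemma~\ref{lem:Waction} together with the identities $(\Qc^*)^W=\Qd$ and $(\Qc^*\tensor\Qt)^W=\Sigma^2\Qd^*$. The only difference is presentational---you spell out the bookkeeping of the three interacting shifts more explicitly than the paper does in the paragraph preceding the corollary.
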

  
Thus if $b(U)$ is even, even degrees are just like the $SG$ case 
(i.e., 0 or 1 dimensional in degree $b(U)$ depending on whether $d_{U'}$
takes a negative value). However in odd degrees there are two parts,
in degrees $b(U)+ 1$ 
mod 4 and $b(U)+3$ mod 4. In  degrees $b(U)+3$ mod 4 and $\geq
b(U)+3$, the dimension is
countably infinite, and in degrees $b(U)+1$ mod 4,  the dimension is $|\{
s\st d_{U'}(s)=2 \mbox{ mod } 4\}|$. In  negative odd degrees $<
m_{U}+1$ the answer is zero.

\subsubsection{$b(V)$ odd}
In this case $\delta$ both shifts and twists the answer, so if $U=U'\oplus 
b(U)\delta$ we have 
$$[S^0,S^U]^G_{\cC}=\Sigma^{b(U)}([S^0,S^{U'}]^{SG}\tensor \Qt)^W.$$
Once again we apply Section \ref{sec:so2}, and the work in the
previous subsection lets us write down the answer.

\begin{cor}
  If $b(U)$ is odd then $[S^0,S^U]^{G}_*=0$ in odd degrees.

  In even degrees $[S^0,S^U]^{G}_*$ is 
  $\bigoplus_s (\Sigma^{d_{U}(s)-1}I_s\tensor \Qt)^W$.
If $d_s(U')$ is 0 mod 4 then
    $(\Sigma^{d_{U}(s)-1}I_s\tensor \Qt)^W=    \Sigma^{1+d_{U}(s)}\Qd^* $, and  if 
    $d_s(U')$ is 2 mod 4 then $(\Sigma^{d_{U}(s)-1}I_s\tensor \Qt)^W
    =\Sigma^{3+d_{U}(s)}\Qd^* $. 
  \end{cor}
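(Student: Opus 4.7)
The plan is to start from the formula
$$[S^0,S^U]^G_{\cC}=\Sigma^{b(U)}\left([S^0,S^{U'}]^{SG}\tensor \Qt\right)^W$$
and recycle the $SG$-calculation from Section \ref{sec:so2}, tracking how the extra $\Qt$ twist alters the $W$-fixed points computed in the previous (even $b(U)$) subsection. The result is essentially forced once one keeps book of the parity data.

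First I would substitute in the $SG$-answer from Section \ref{sec:so2}: the even-degree part is a single $\Q$ concentrated in degree $0$ (when $d_{U'}\geq 0$), carrying the trivial $W$-action, and the odd-degree part is $\Sigma^{U'-1}\II$, possibly quotiented by a diagonal copy $\Delta_{-1}\cong\Q$ in degree $-1$. After tensoring with $\Qt$ the even-degree $\Q$ becomes a sign representation of $W$, so its $W$-invariants vanish; for the same reason, the diagonal $\Delta_{-1}$ has zero image on $W$-invariants, so quotienting by it has no effect. Suspending by the odd integer $b(U)$ then shifts everything in parity, so no surviving piece lands in odd degrees. This establishes the vanishing of $[S^0,S^U]^G_*$ in odd degrees.

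For the even degrees, only $(\Sigma^{U'-1}\II\tensor\Qt)^W$ contributes, and I compute it summand by summand. By Lemma \ref{lem:Waction}, the $W$-action on $\Sigma^{d_{U'}(s)}I_s$ in degree $2u$ depends only on the parity of $u$, so tensoring with $\Qt$ swaps the trivial- and sign-action degrees. In the even $b(U)$ case this distinction gave the dichotomy $(\Qc^*)^W=\Qd^*$ vs.\ $(\Qc^*\tensor\Qt)^W=\Sigma^2\Qd^*$, producing the $+3$ and $+1$ shifts respectively depending on $d_{U'}(s)\bmod 4$; under the $\Qt$ twist, these two cases exchange, which is exactly the swap of $+1$ and $+3$ appearing in the statement. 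Reassembling and using $d_U(s)=b(U)+d_{U'}(s)$ together with the suspension by the odd integer $b(U)$ (which moves the answer from odd into even degrees) yields precisely $I_s^-(U)$ as stated.

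The only real work is this mod-$4$ bookkeeping and confirming that the $\Delta_{-1}$ quotient truly drops out on taking $W$-invariants after twisting. Once those observations are in place, the computation runs in parallel to the even-$b(U)$ case and no new input is required.
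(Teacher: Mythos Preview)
Your proposal is correct and follows exactly the approach the paper indicates: the paper simply records the displayed formula with the $\Qt$ twist and says ``the work in the previous subsection lets us write down the answer,'' and you have faithfully carried out that bookkeeping (the vanishing of the degree-$0$ $\Q$ and of $\Delta_{-1}$ under the twist, and the swap of the $+1$/$+3$ shifts). There is nothing to add.
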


Thus if $b(U)$ is odd, the homotopy is zero in odd degrees (i.e., in
degrees congruent to $b(U)$ mod 2). 
 In even degrees there are two parts, in degrees $b(U)+1$ 
mod 4 and $b(U)+3$ mod 4. In degrees $b(U)+1$ mod 4 and $\geq b(U)+1$, the dimension
is countably infinite, and in degrees $b(U)+3$ mod 4 it is
dimension is $|\{
s\st d_{U'}(s)=2 \mbox{ mod } 4\}|$. In  negative odd degrees $<
m_{U}+1$ the answer is zero.

 \section{The dihedral block in $O(2)$}
  \label{sec:o2dihedral}
On the dihedral block, $S^0$ corresponds to the constant sheaf
$\Q_{\mcDh}$ over $\mcDh$, and $\Sigma^US^0$ corresponds to the sheaf we may call
$\Sigma^U\Q_{\mcDh}$.

To explain, $\Sigma^U\Q_{\mcDh}$  is the same as $\Q_{\mcDh}$
except at the finitely many stalks where $d'_U(t)\neq 0$. At the $t$th
stalk, we have $\Sigma^{d'_U(t)}\Q^{\pm}$. In all cases the action of
$W_t=W_G(D_{2t})=D_{4t}/D_{2t}$ is the action on $H_{d_U(t)}(S^{U^{D_{2t}}})$.

It follows that
$$[S^0,S^U]^G_{\mcDh}=\Hom_W(\Q_{\mcDh},
\Sigma^U \Q_{\mcDh})=\Gamma(\Sigma^{U}\Q_{\mcDh})^W$$
(i.e., equivariant sections). 

It remains to identify the actions. Of course $\delta^D=0$ for all
dihedral subgroups $D$, so we can ignore the $\delta$ summands.

\begin{lemma}
The representation $\sigma_n^{D_{2t}}$ of 
$W_t=W_G(D_{2t})=D_{4t}/D_{2t}$ is the trivial representation $\Q$ if $2t|n$, is the sign 
representation $\Qt$  if $t|n$ and $2t\not |n$ and is zero otherwise. 
\end{lemma}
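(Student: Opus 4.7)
The plan is to unwind the definition of $\sigma_n$ and directly compute the fixed subspace and the induced $W_t$-action by tracking the angles of rotations.

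First, I would make the $D_{2t}$-action on $\R^2$ via $\sigma_n$ explicit. Since $\sigma_n$ is the pullback of $\sigma_1$ along $O(2) \to O(2)/C_n \cong O(2)$ (with the chosen orientation-preserving isomorphism), a rotation $r_\theta \in O(2)$ acts via $\sigma_n$ as rotation by $n\theta$, and any reflection maps to a reflection. Take $D_{2t} \subset O(2)$ to be generated by the rotation $r = r_{2\pi/t}$ and a reflection $s$; then on $\sigma_n$, $r$ acts as rotation by $2\pi n/t$ and $s$ acts as a reflection fixing a line $L \subset \R^2$.

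Next I compute $(\sigma_n)^{D_{2t}}$. The rotation $r$ acts trivially on $\R^2$ iff $t \mid n$. If $t \nmid n$, then $r$ has no non-zero fixed vector, so $(\sigma_n)^{D_{2t}} = 0$. If $t \mid n$, all rotations in $D_{2t}$ act trivially, so every element of $D_{2t}$ acts through either $1$ or $s$, and $(\sigma_n)^{D_{2t}} = L$ is one-dimensional.

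Finally I identify the $W_t$-action when $t \mid n$. The Weyl group $W_t = D_{4t}/D_{2t}$ has order two, generated by (the class of) the rotation $r_{\pi/t} \in D_{4t} \setminus D_{2t}$. Under $\sigma_n$ this acts on $\R^2$ as rotation by $\pi n/t$. If $2t \mid n$ then $n/t$ is even, so this rotation is the identity and $W_t$ acts trivially on $L$, giving $\Q$. If $t \mid n$ but $2t \nmid n$, then $n/t$ is odd, the rotation is $-\mathrm{id}$ on $\R^2$, hence $-\mathrm{id}$ on $L$, giving $\Qt$.

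There is really no main obstacle; the only care needed is to verify that the generator of $W_t$ is the class of $r_{\pi/t}$ (equivalently, that $D_{4t}$ contains a rotation of half the minimal angle of $D_{2t}$), and to use the orientation convention for the isomorphism $O(2)/C_n \cong O(2)$ so that the multiplication-by-$n$ formula on rotation angles holds.
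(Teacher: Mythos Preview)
Your argument is correct and follows the same direct-computation approach as the paper; the paper's proof is in fact much terser, recording only that $\dim_{\R}(\sigma_n)^{D_{2t}}=1$ precisely when $t\mid n$ and leaving the identification of the $W_t$-action implicit, whereas you have spelled out the angle-tracking that determines whether the generator $r_{\pi/t}$ acts as $+1$ or $-1$ on the fixed line.
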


\begin{proof}
We see that  $\sigma_n^D$ is either 0 or 1 dimensional, and in fact 
$\dim_{\R}(\sigma_n)^{D_{2t}}=1$ precisely if $t|n$. 
\end{proof}

\begin{cor}
The group $[S^0, S^U]^G_{\mcD}$ is almost equal to $\Gamma
(\Q_{\mcDh})=C(\mcDh, \Q)$ in degree 0. The difference occurs in the
finitely many spots at which $d'_U(t)\neq 0$. If $d'_U(t)\neq 0$, a
factor $\Q$ is removed from degree 0 and replaced by a term in degree
$d'_U(t)$. The new term is $\Q$ if $\Sigma_n c_n$ is even, where the
sum is over those $n$ so that  $t|n$, $2t\not |n$. Otherwise the new
term is 0.
  \end{cor}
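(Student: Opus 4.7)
The plan is to compute $\Gamma(\Sigma^U\Q_{\mcDh})^W$ stalk-by-stalk. Since $\Sigma^U\Q_{\mcDh}$ differs from the constant sheaf $\Q_{\mcDh}$ only at the finitely many isolated points $D_{2t}\in\mcDh$ with $d'_U(t)\neq 0$, and since these points are isolated from one another and from the compactifying point $G$, the global sections split cleanly into an ``ambient'' constant-sheaf contribution and a local contribution at each special point.

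Handling the ambient part first, away from the special points the stalk is $\Q$ in degree $0$ with trivial $W$-action, which contributes $C(\mcDh,\Q)$ in degree $0$. However, at each special $t$ the degree-$0$ stalk has been replaced by zero (the new stalk sits in degree $d'_U(t)\neq 0$), so any global section must vanish at $t$ in degree $0$. This removes one copy of $\Q$ from $C(\mcDh,\Q)$ for each special $t$, which is exactly the first assertion of the corollary.

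The remaining step is to identify the new term in degree $d'_U(t)$. By construction the stalk at $t$ is $\Sigma^{d'_U(t)}\Q^{\pm}$, where the sign records the $W_t$-action on $H_{d'_U(t)}(S^{U^{D_{2t}}})$, i.e.\ the determinant of $W_t$ acting on $U^{D_{2t}}$. Using the preceding lemma, $U^{D_{2t}}=\bigoplus_n c_n\sigma_n^{D_{2t}}$ decomposes into $c_n$ copies of the trivial character for each $n$ with $2t\mid n$ together with $c_n$ copies of the sign character for each $n$ with $t\mid n$ but $2t\nmid n$. The determinant is therefore the sign character to the power $\sum c_n$, where the sum ranges over $n$ with $t\mid n$ and $2t\nmid n$. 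Taking $W_t$-invariants ($\Q^{W_t}=\Q$, $\Qt^{W_t}=0$) gives $\Q$ when this parity is even and $0$ when it is odd, completing the proof. The only non-formal step is the determinantal identification of the $W_t$-action on the top stalk, and this reduces at once to the lemma.
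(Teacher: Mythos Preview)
Your proof is correct and is exactly the argument the paper intends: the paper states the corollary without proof, having set up $[S^0,S^U]^G_{\mcDh}=\Gamma(\Sigma^{U}\Q_{\mcDh})^W$ and the lemma identifying the $W_t$-action on $\sigma_n^{D_{2t}}$, and your stalkwise computation with the determinantal identification of the top-homology action is precisely how one reads off the result from those ingredients.
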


The main thing to observe about the dihedral block is that the
contribution to $[S^0, S^U]^G_*$ is in a finite range of degrees, and
is finite dimensional except in degree 0 ($=\dim U^G$), where it is of
countably infinite dimension. 
  \section{The group $SO(3)$}
\label{sec:so3}
  In this section the ambient group is $\Gamma=SO(3)$. It is a matter
  of elementary representation theory to deduce the answer from those
  for $G=O(2)$.
All the calculations in this section can be made with \cite{so3q}, since 
the Adams spectral sequence collapses to a short exact sequence.  
  
  \subsection{Subgroups in blocks}
   This group has 7 blocks of subgroups. The cyclic block $\cC$
  is exactly as before, whilst the dihedral one
  $\mcD'=\{ D_6, D_8, D_{10}, \ldots \}$ omits $D_2$
  and $D_4$ ($D_2$ because it is conjugate to $C_2$ in
  $SO(3)$ and $D_4$ because its Weyl group in $SO(3)$ is
  $\Sigma_3$). Otherwise these blocks are the same as for $O(2)$. Then
  there are 5 isolated blocks $\{ (H)\}$, which we record in
  Subsection \ref{subsec:isolated} with their  Weyl group,  as  $(H,W_G(H))$.

\subsection{Models}

The standard model $\cA(\Gamma )$ is an abelian category  which provides
 an algebraic model for rational $\Gamma$-spectra in the sense that there
 is a Quillen equivalence \cite{KedziorekSO(3)}: 
  $$  \mbox{$\Gamma$-spectra}\simeq \mbox{DG-$\cA(\Gamma)$} .$$
  Since all objects are formal we'll work with $\cA(\Gamma)$, but use
  the notation 
imported   from $\Gamma$-spectra.

Since the block decomposition is as Zariski clopen sets, the algebraic model 
of rational $SO(3)$ spectra splits as a product of 7 categories 
in a corresponding fashion 
$$\cA(\Gamma)\simeq \cA (\Gamma |\cCh) \times \cA (\Gamma |\mcDph)\times 
\prod_H \Q[W_{\Gamma}(H)]\modules. $$
The cyclic part has already been essentially dealt with in 
Section \ref{sec:so2} (or rather Section \ref{sec:o2cyclic}) since
$\cA (\Gamma |\cCh)$ is very close to $\cA(SG)[W]$
and the dihedral part is simply a category of equivariant sheaves 
$$\cA(\Gamma|\mcDph)=\cW-\shv/\mcDph. $$

The difference between $\cA (\Gamma|\cCh)$ and $\cA(SG)[W]$ is purely
at the trivial subgroup where the Weyl group $W_{\Gamma}(1)= \Gamma$
has cohomology ring $\Q[d]$, whereas  in $O(2)$ the Weyl group
$W_G(1)=G$ has  governing ring $\Q[c][W]$. 
Thus $\cA (\Gamma|\cCh)$ is the retract specified by
replacing the $\Q[c][W]$-module  $e_1N$  by the $\Q[c]$-module of
 $W$-fixed points. In effect, on cyclic parts, 
 the $\Gamma$ spectra correspond to $G$-spectra where the free part is
 extended from the $W$-invariants.

 \subsection{Representations}
The other change is in the representation theory. Now the simple 
modules over $\R$ are $W_1, W_3, W_5, \ldots $ one of each odd 
dimension (see for example \cite{BtD}, especially II.5).

The complexification of $W_{2i+1} $ has weights $z^{2i}, z^{2i-2},
\ldots, z^0, z^{-2}, \ldots , z^{-2i}$ with multiplicity
1 as a representation of $SU(2)$. Accordingly, if we write 
$\delta(i,s)=|\{ j\in [1, i]\st s|j\}| $, we have
$$d_{W_{2i+1}}(s)=1+2\delta (i,s), \mbox{ and }
d'_{W_{2i+1}}(t)=\delta (i,t). $$
Thus if $U=\sum_ia_iW_{2i+1}$ we have
$$d_U(s)=\sum_i(1+2\delta (i,s))a_i \mbox{ and }
d'_U(t)=\sum_i\delta (i,t)a_i .$$

By considering weights and the overall determinant we see that 
$$\res^{SO(3)}_{O(2)}W_{2i+1}=\delta^i +\sigma_1 +\cdots +\sigma_i.$$

  \subsection{Cyclic block}
For the cyclic block, despite the intricacies at the trivial subgroup,
restriction to $O(2)$ gives an isomorphism for spheres.

\begin{lemma}
For a virtual representation $U$ of $\Gamma$, the restriction map 
$$[S^0, S^U]^{SO(3)}_{\cC}\stackrel{\cong}\lra 
[S^0,  S^U]^{O(2)}_{\cC}=([S^0,  S^U]^{SO(2)})^W  $$
is an isomorphism. 
\end{lemma}

\begin{proof}
Consider  the resolution  
$$0\lra S^U\lra e(\Q)\lra f(\Sigma^U \II)\lra 0$$ 
in $\cA (G|\cCh)$, which we note is $W$-equivariant. 
To move this to $\cA (\Gamma|\cCh)$, the only change is that $\Sigma^UI_1$ is 
replaced by its $W$-fixed points. However, to calculate homotopy 
groups we take invariants in any case. Accordingly, the cochain complex 
$$\Q  \lra (\Sigma^U \II)^W$$
giving $[S^0,S^U]^G_{\cC}$ is exactly the same as that giving $[S^0,S^U]^{\Gamma}_{\cC}$. 
\end{proof}

    \subsection{Dihedral block}
  For the dihedral block, the forgetful map gives an isomorphism 
  $$[S^0, S^U]^{SO(3)}_{\mcDp}\stackrel{\cong}\lra [S^0, S^U]^{O(2)}_{\mcDp}.$$
The only thing to highlight is that the idempotent in $O(2)$ is that
corresponding to $\mcDp=\mcD\setminus \{ D_2, D_4\}$, so we have
removed the two summands corresponding to $D_2$ and $D_4$.

\subsection{Isolated blocks}
\label{subsec:isolated}
  Here the model is elementary 
  $$\cA (\Gamma |(H))=\Q W_{\Gamma}(H)\modules . $$
  The 5 isolated blocks (in the form $(H, W_G(H))$) are 
  $$(SO(3),1), (A_5, 1), (\Sigma_4, 1), (A_4, C_2), (D_4, \Sigma_3).$$
The contributions are
$$[S^0, S^U]^{\Gamma}_{(H,W_{\Gamma}(H))}=\Hom_{W_{\Gamma}(H)}(\Q ,
H_*(S^{U^H}))=H_*(S^{U^H})^{W_{\Gamma}(H)}. $$
  It remains only to determine the dimensions of the fixed point
  representations $U^H$, and in the case that the Weyl group is
  non-trivial, to  ask how it acts. In effect this is the group $\Sigma_4/D_4$ acting on
  $W_{2i+1}^{D_4}$, and $\Sigma_4/A_4$ acting on
  $W_{2i+1}^{A_4}=(W_{2i+1}^{D_4})^{C_3}$. It is routine to work out
  the exact answer using the well known 
relationship between the representations of $SU(2)$ and $SO(3)$  as 
summarised in Section \ref{sec:su2}.  First, we may use characters of complex
  representations since fixed points commute with complexification. 
The restriction of  the natural representation  of $SU(2)$ is easily
spotted from the character table of the binary octahedral group. This
gives the characters of its symmetric powers which gives the required
information about $W_{2i+1}$. 

  \section{The group $Pin (2)$}
\label{sec:pin2}
  In this section the ambient group is $\Gt=Pin(2)$.
The algebraic model of $\Gt=Pin(2)$ and of $G=O(2)$ are essentially the same
in the sense that $Pin(2)$ has two blocks essentially the same as that
for $O(2)$. However, some care is necessary.

All the calculations in this section are straightforward adaptions from \cite{o2q}, since 
the Adams spectral sequence collapses to a short exact sequence. 

\subsection{Subgroups in blocks}
First, we let
$\cCt =\{C_1, C_2, \ldots
\}$ and $\cQ=\{Q_4, Q_8, Q_{12}, \ldots \}$, where $Q_{4n}$ is the
quaternion group of order $4n$. There is again a cyclic
block $\cCth$, and it is isomorphic to $\cCh$, and the Weyl groups and
sheaf of rings are also the same. However the isomorphism $\cCt\lra
\cC$ takes each group to the group of the same order, and this is not
induced by the quotient map $p: Pin(2)\lra O(2)$. On the other hand the
quotient map does induce the isomorphism $\cQ\lra \mcD$, since we have
$Q_{4n}/C_2=D_{2n}$ for the indvidual groups.

\subsection{Models}
The standard model $\cA(\Gt)$ is an abelian category  which provides
 an algebraic model for rational $\Gt$-spectra in the sense that there
 is a Quillen equivalence \cite{gq1}:
  $$  \mbox{$\Gt$-spectra}\simeq \mbox{DG-$\cA(\Gt)$} . $$
  Since all objects are formal we'll work with $\cA(\Gt)$, but use the notation familiar
  from $\Gt$-spectra.

Since the block decomposition is as clopen sets, the algebraic model 
of rational $Pin(2)$ spectra splits in a corresponding fashion 
$$\cA(\Gt)=\cA (\Gt|\cCth) \times \cA (\Gt|\cQh). $$
The cyclic part has already been essentially dealt with in 
Section \ref{sec:so2} since 
$$\cA (\Gt|\cCh)=\cA(S\Gt)[W], $$
and the dihedral part is simply a category of equivariant sheaves 
$$\cA(\Gt|\cQh)=\cW-\shv/\cQh. $$

\subsection{Representations}
The other difference is in the representation theory. The simple
representations of $Pin (2)$ include the inflations from $O(2)$, which
we continue to call $1, \sigma_1, \sigma_2, \ldots, \delta$. These
have kernels $Pin(2), C_2, C_4, \ldots, Spin(2)$.  However
there is also the representation $h_1$ of $Pin(2)\subseteq SU(2)\cong
Sp(1) $ on the quaternions, which is of real dimension 4. Similarly, for
each cyclic group of odd order $m$ we may pull back $h_1$ along
$Pin(2)\lra Pin(2)/C_m$ to give a represention $h_m$. The
representations $h_1, h_3, h_5, \ldots $ have kernels $C_1, C_3, C_5,
\ldots $.

The action on $\sigma_n$ is via the quotient, so that
$(\sigma_n)^K=\sigma_n^{pK}$. Similarly, $h_m^K=h_1^{p_mK}$, so that
$d_{h_m}(s)=4$ if $s|m$ and is zero otherwise, and $d'_{h_m}(t)=0$. 

Thus if 
$$U=a+b\delta +\sum_nc_n \sigma_n+\sum_md_mh_m $$
we have dimension functions $d_U: \cCt\lra \N$ and $d'_U: \cQ\lra \N$
given by 
$$d_U(s)=a+b +2\sum_{s\; odd, s|m}c_n +2\sum_{s \;even, s/2|m}c_n+
4\sum_{s|m} d_m,  d'_U(t)=a+\sum_{ t|m}c_n . $$

\section{The group $SU(2)$}
\label{sec:su2}
In this section the ambient group is $\Gammat=SU(2)$.

All the calculations in this section are straightforward adaptions from \cite{so3q}, since 
the Adams spectral sequence collapses to a short exact sequence.  

\subsection{Subgroups in blocks}
The group $\Gamma$ again has 7 blocks. 

Again we have the cyclic block $\cCth$, and the quaternion block
$\cQph$, with the same relationship to the blocks in $Pin(2)$ as those
in $SO(3)$ had to $O(2)$. There are then 5 singleton blocks
corresponding to the double covers $\Ht$ of the groups $H$ in
$SO(3)$. These isolated blocks are isomorphic to the ones in $SO(3)$,
since in the block $(\Ht,W_{\Gt}(\Ht))$ we have 
$W_{\Gt}(\Ht)=W_G(H)$. 

\subsection{Models}
The standard model $\cA(\Gammat)$ is an abelian category  which provides
 an algebraic model for rational $\Gammat$-spectra in the sense that there
 is a Quillen equivalence \cite{gq1}
  $$  \mbox{$\Gammat$-spectra}\simeq \mbox{DG-$\cA(\Gammat)$} . $$
  Since all objects are formal we'll work with $\cA(\Gammat)$, but use the notation familiar
  from $\Gammat$-spectra.

Since the block decomposition is as clopen sets, the algebraic model 
of rational $SU(2)$-spectra splits in a corresponding fashion 
$$\cA(\Gammat)\simeq \cA (\Gammat |\cCh) \times \cA (\Gammat |\cQph)\times 
\prod_H \Q[W_{\Gamma}(H)]\modules. $$
The cyclic part has already been essentially dealt with in 
Section \ref{sec:so2} since 
$\cA (\Gammat |\cCth)$ is very close to $\cA(S\Gt)[W],$
with the adaptions at the trivial group as in Section \ref{sec:so3}. 
The dihedral part is simply a category of equivariant sheaves 
$$\cA(\Gammat|\cQph)=\cW-\shv/\cQph. $$
\subsection{Representations}
The representations of $SU(2)$ over $\C$ are the symmetric powers of the
natural representation $V_1, V_2, V_3, \ldots $;  $V_i$ of dimension
$i$ consists of homogeneous polynomials of degree $i-1$ (see\cite{BtD} for example). The
representation $V_{2i}$ is complex so as a representation 
over $\R$ it is  of dimension $4i$. The representations $V_{2i+1}$
are complexifications of the modules $W_{2i+1}$ of real dimension $2i+1$.

For the fixed point degrees, we note that for $C_s\subseteq SU(2)$, 
$$W_i^{C_s}=W_i^{pC_s}, W_i^{Q_{4t}}=W_i^{D_{2t}}$$
Similarly, 
$$\dim_{\R}(V_{2i}^{C_s})=|\{ j\in [1,i]\st s|j\}|, V_{2i}^{Q_{4t}}=0.$$
 \subsection{Isolated blocks}
  Here the model is elementary. Noting that
  $W_{\Gammat}(\Ht)=N_{\Gammat}(\Ht)/\Ht\cong
  N_{\Gamma}(H)/H=W_{\Gamma}(H)$, and noting that the fixed point sets
  of the representations $V_{2i}$ are zero, the answers can be read
  off those in Subsection \ref{subsec:isolated}.

\end{document}